\numberwithin{equation}{section}
\newtheorem{prop}{Proposition}[section]
\newtheorem{theorem}[prop]{Theorem}
\newtheorem{lemma}[prop]{Lemma}
\newtheorem{example}[prop]{Example}
\begin{document}
\title{Asymptotic behavior of the first Dirichlet eigenvalue of AHE manifolds}
\author{Xiaoshang Jin}
\date{}
\maketitle
\begin{abstract}
In this article, we investigate the rate at which the first Dirichlet eigenvalue of geodesic balls decreases as the radius approaches infinity. We prove that if the conformal infinity of an asymptotically hyperbolic Einstein manifold is of nonnegative Yamabe type, then the two-term asymptotic of the eigenvalues is the same as that in hyperbolic space.
\end{abstract}
\section{Introduction}
Suppose that $\mathbb{H}^{n+1}$ is an $n+1-$ dimensional hyperbolic space, then it is well-known that the spectrum of the Laplacian
$\sigma(-\Delta_\mathbb{H})=[\frac{n^2}{4},+\infty).$ Later the result is extended by R. Mazzeo in \cite{mazzeo1988hodge} and \cite{mazzeo1991unique}. He showed that if $(X,g)$ is an $n+1-$ dimensional asymptotically hyperbolic manifold, then the spectrum of the Laplacian
$$\sigma(-\Delta)=\sigma_p(-\Delta)\cup[\frac{n^2}{4},+\infty).$$
Here $\Delta=\Delta_{g}=\frac{1}{\sqrt{G}}\frac{\partial}{\partial x^i}(g^{ij}\sqrt{G}\frac{\partial}{\partial x^j})$ stands
for the Laplace-Beltrami operator and $\sigma_p(-\Delta)\subseteq (0,\frac{n^2}{4})$ is a finite set of point spectrums ($L^2$ eigenvalues). Lee \cite{lee1994spectrum} discovered a connection between its spectrum and conformal infinity when $g$ is also Einstein. In other words, $\sigma_p(-\Delta)$ is empty when $(X,g)$ is an asymptotically hyperbolic Einstein (AHE) manifold with conformal infinity of nonnegative Yamabe type. One can also see \cite{wang2002new} for another proof.
\par We rewrite Lee's result: $Y(\partial X,[\hat{g}])\geq 0\Rightarrow \lambda_1(X)=\frac{n^2}{4}.$  It is clear from the property of the first Dirichlet eigenvalue on the noncompact manifold that
$$\lim\limits_{R\rightarrow+\infty} \lambda_1(B(p,R))=\frac{n^2}{4}$$
holds for any $p\in X.$ Here $B(p,R)$ is the geodesic ball centered at $p$ of radius $R.$ In this paper, we will present the rate of how $\lambda_1(B(p,R))$ tends  to $\frac{n^2}{4}.$
\par Before stating our primary theorem, let's first present some basic conceptions. Suppose that $\overline{X}$ is a compact manifold with smooth boundary $\partial X$ and $g$ is a complete metric in its interior $X.$ We say that $(X,g)$ is conformally compact if there exists a defining function $\rho$ such that $\bar{g}=\rho^2 g$ extends continuously to $\overline{X}.$ Here
$$\rho>0\ in\ X,\ \ \ \rho=0\ on\ \partial X,\ \ \ d\rho\neq 0\ on\ \partial X.$$
$(X,g)$ is called $C^{m,\alpha}$ (smoothly) conformally compact if $\bar{g}=\rho^2 g$ is $C^{m,\alpha}$ (smooth) on $\overline{X}.$ For any defining function $\rho,$ we call $\hat{g}=\rho^2g|_{T\partial X}$ the boundary metric. Hence the conformal class $(\partial X,[\hat{g}])$ is uniquely determined by $g$ and we call it the conformal infinity of $g.$
\par Let $\bar{g}=\rho^2g$ be a $C^2$ conformal compactification of $(X,g),$ then a simple calculation, such as that in \cite{mazzeo1988hodge} indicates that the sectional curvature of $g$ tends to $-|d\rho|^2_{\bar{g}}|_{\partial X}$ as $\rho\rightarrow 0.$ Therefore no matter what the topology and geometry of $g$ look like in $X,$ the boundary behavior would always remind us of hyperbolic space. We call $(X,g)$ an asymptotically hyperbolic (AH for short) manifold if it is  conformally compact and $|d\rho|^2_{\bar{g}}=1$ on the boundary $\partial X.$
\par Let $(X,g)$ be a $C^2$ conformally compact manifold of dimension $n+1,$ if $g$ is also Einstein, i.e. $Ric[g]=-ng,$ then $|d\rho|^2_{\rho^2g}=1$ on $\partial X$ for any smooth defining function $\rho.$ In this case, we say that $(X,g)$ is an asymptotically hyperbolic Einstein (AHE for short) manifold.
\par Here is the main result of this paper:
\begin{theorem}
  Let $(X,g)$ be an $n+1-$ dimensional AHE manifold with conformal infinity $(\partial X,[\hat{g}]).$ If the Yamabe constant $Y(\partial X,[\hat{g}])\geq 0,$ then for any $p\in X,$
\begin{equation}\label{1.1}
  \lambda_1(B(p,R))=\frac{n^2}{4}+\frac{\pi^2}{R^2}+O(R^{-3}),\ \ R\rightarrow+\infty.
\end{equation}
  Here $\lambda_1$ denotes the first Dirichlet eigenvalue.
\end{theorem}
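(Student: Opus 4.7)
The plan is to use Lee's ground-state transformation. By \cite{lee1994spectrum} (and the alternative proof in \cite{wang2002new}), the hypothesis $Y(\partial X,[\hat g])\ge0$ ensures the existence of a smooth positive $\phi$ on $X$ with $-\Delta\phi=\tfrac{n^2}{4}\phi$; its asymptotic behavior is $\phi\sim c(y)\,r\,\rho^{n/2}$ as $\rho\to0$, paralleling the canonical ground state on $\mathbb{H}^{n+1}$ where the radial solution is $\phi=r/\sinh r\sim r\,e^{-nr/2}$. Writing $u=\phi v$ for $u\in H^1_0(B(p,R))$, integration by parts using $-\Delta\phi=\tfrac{n^2}{4}\phi$ gives the identity $\int|\nabla u|^2\,dV=\int\phi^2|\nabla v|^2\,dV+\tfrac{n^2}{4}\int u^2\,dV$, so $\lambda_1(B(p,R))=\tfrac{n^2}{4}+\mu_1(R)$ with
$$\mu_1(R)=\inf_{v}\frac{\int\phi^2|\nabla v|^2\,dV}{\int\phi^2 v^2\,dV},$$
and it suffices to prove $\mu_1(R)=\pi^2/R^2+O(R^{-3})$.

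For the upper bound I would take a radial test function $v(x)=\chi(r(x))$ with $\chi(R)=0$, where $r=d(p,\cdot)$. By the coarea formula the weighted Rayleigh quotient becomes
$$\frac{\int_0^R\chi'(r)^2\,\Phi(r)\,dr}{\int_0^R\chi(r)^2\,\Phi(r)\,dr},\qquad \Phi(r):=\int_{\partial B(p,r)}\phi^2\,dA.$$
The AHE form of $g$ in boundary-normal coordinates, combined with the asymptotic of $\phi$, yields $\Phi(r)=Cr^2(1+O(e^{-2r}))$ as $r\to\infty$, with $C=\int_{\partial X}c(y)^2\,dV_{\hat g}$. The 1D problem with weight $r^2$ is precisely the three-dimensional Euclidean radial Dirichlet eigenvalue problem, whose first eigenfunction is $\chi(r)=\sin(\pi r/R)/r$ (smooth at $r=0$ since $\sin(\pi r/R)/r=\pi/R-\pi^3r^2/(6R^3)+\cdots$) with eigenvalue exactly $\pi^2/R^2$. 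Using this $\chi$ and estimating the corrections from the $O(e^{-2r})$ remainder in $\Phi$ gives the upper bound $\mu_1(R)\le\pi^2/R^2+O(R^{-3})$.

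For the lower bound I would slice along geodesic rays issuing from $p$. With polar coordinates $x=\exp_p(r\theta)$ for $\theta\in S^n_p$ and Jacobian $J(r,\theta)$, the bound $|\nabla v|^2\ge(\partial_r v)^2$ yields
$$\int\phi^2|\nabla v|^2\,dV\ge\int_{S^n_p}\int_0^R w_\theta(r)(\partial_r v)^2\,dr\,d\theta,\qquad w_\theta(r):=\phi^2(r,\theta)J(r,\theta).$$
Each weight $w_\theta$ behaves like $r^n$ near $r=0$ and like $c(\theta)^2r^2(1+O(e^{-2r}))$ at infinity. The sharp 1D weighted Poincar\'e inequality on $[0,R]$ with Dirichlet condition at $r=R$ and the natural (degenerate) condition at $r=0$ gives first eigenvalue $\pi^2/R^2+O(R^{-3})$ uniformly in $\theta$. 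Integrating over $\theta$ and using $\int\phi^2 v^2\,dV=\int_{S^n_p}\int_0^R w_\theta v^2\,dr\,d\theta$ yields the matching lower bound.

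The main obstacle is establishing the uniform-in-$\theta$ sharp 1D weighted Poincar\'e inequality with the correct constant $\pi^2/R^2$ (rather than $\pi^2/(4R^2)$, which would arise for a bounded weight on $[0,R]$ with the natural Neumann-type condition at the origin) and with the $O(R^{-3})$ error. This depends crucially on the $r^2$-growth of $\phi^2J$, which in turn rests on Lee's positive eigenfunction having the logarithmic enhancement $\phi\sim r\rho^{n/2}c(y)$; controlling the angular variation of $c(y)$ and the subleading AHE corrections uniformly in the direction $\theta$ is the principal technical challenge.
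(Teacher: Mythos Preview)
Your proposal has a genuine gap at its foundation. Lee's paper does \emph{not} construct a positive function $\phi$ satisfying $-\Delta\phi=\tfrac{n^2}{4}\phi$ exactly; what Lee builds is a positive $u$ with $\Delta u=(n+1)u$ and $u\sim 1/x$, and then $\varphi=u^{-n/2}$ satisfies $-\Delta\varphi/\varphi=\tfrac{n^2}{4}+\tfrac{n(n+2)}{4}\bigl(1-|du|^2/u^2\bigr)\ge\tfrac{n^2}{4}$, a strict inequality in general. In particular $\varphi\sim\rho^{n/2}$ near infinity with \emph{no} logarithmic enhancement, so the weight $\varphi^2\,dV$ has bounded radial density and your one-dimensional Poincar\'e constant collapses to $\pi^2/(4R^2)$---precisely the failure you flag. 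Manufacturing an exact $\tfrac{n^2}{4}$-eigenfunction with the claimed asymptotic $\phi\sim c(y)\,r\,\rho^{n/2}$ on a general AHE manifold means analyzing the resolvent at the double indicial root $s=n/2$ and showing the $\rho^{n/2}\log\rho$ branch appears with a positive boundary coefficient; this is substantial scattering-theoretic work that neither Lee nor Wang carries out, and it is not a routine consequence of $Y(\partial X,[\hat g])\ge0$. (Incidentally, $\phi=r/\sinh r$ solves $-\Delta\phi=\tfrac{n^2}{4}\phi$ only when $n=2$.)

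The paper sidesteps all of this. For the upper bound it simply invokes Cheng's Ricci comparison $\lambda_1(B(p,R))\le\lambda_1(B^{\mathbb H}(R))$ and tests with $e^{-nt/2}\sin(\pi t/R)$ on the hyperbolic ball. For the lower bound it keeps Lee's $u$ and builds the explicit barrier
\[
\psi=u^{-n/2}\sin\bigl(a\ln(\varepsilon u)\bigr)\quad\text{on }F_\varepsilon=\{u<1/\varepsilon\},\qquad a=\frac{\pi}{\ln\varepsilon}+\frac{c_n}{\ln^2\varepsilon}.
\]
A direct computation using only $\Delta u=(n+1)u$ and Lee's inequality $|du|^2\le u^2$ gives $-\Delta\psi/\psi\ge\tfrac{n^2}{4}+a^2$ on $F_\varepsilon$, whence $\lambda_1(F_\varepsilon)\ge\tfrac{n^2}{4}+a^2$ by Barta's inequality. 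Since $\ln u\sim -\ln x\sim d_g(p,\cdot)$, one has $B(p,R)\subset F_{e^{-R-C}}$ and the lower bound follows. The point is that $\ln u$ serves as a surrogate radial coordinate for which everything is algebraically exact; there is no need for an exact ground state, no weighted one-dimensional spectral analysis, and no uniform-in-$\theta$ control.
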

Theorem 1.1 makes clear that the rate at which the first Dirichlet eigenvalue of geodesic balls tends to $\frac{n^2}{4}$ is the same as that in hyperbolic space, at least in the second term of the expansion.
\par On the other hand, we believe that the rate at which the first Dirichlet eigenvalue decreases is related to the geometry structure of manifolds. It is connected to the number of ends. Let's recall the work of Witten-Yau \cite{witten1999connectedness}. They showed that the boundary $\partial X$ of an AHE manifold $(X,g)$ is connected if $Y(\partial X,[\hat{g}])>0.$ Later the work was extended by Cai and Galloway in \cite{cai2000boundaries} where they relaxed the assumption that $\partial X$ has nonnegative Yamabe constant.
In \cite{wang2001conformally}, Wang proved that if the first eigenvalue of an AHE manifold $\lambda_1(X)\geq n-1,$ then it either has only one end or it must be a warped product $(\mathbb{R}\times N, dt^2+\cosh^2 t g_N).$ It would provide a new proof for Cai-Galloway's result if combined with Lee's work in \cite{lee1994spectrum}. Let's summarize their work: for an AHE manifold $(X,g),$
$$Y(\partial X, [\hat{g}])\geq 0\Longrightarrow \lambda_1(X)=\frac{n^2}{4}\Longrightarrow \partial X \ is\ connected\ (X\ has\ one\ end).$$
Later, Li and Wang expanded the results in \cite{li2001complete} and \cite{li2002complete} where they didn't require $X$ to be conformally compact. In this case, $X$ either has one end or is a warped product. Now we could rule out the case of warped product by a direct calculation.
\par In fact, as an application of theorem 0.5 and 0.6 in \cite{li2002complete}, we could obtain the following property:
\begin{prop}
Let $(X,g)$ be a complete $n+1-$dimensional manifold with $n\geq 2$ and $Ric[g]\geq -ng.$ If
\begin{equation}\label{1.2}
\lambda_1(B(p,R))=\frac{n^2}{4}+\frac{\pi^2}{R^2}+O(R^{-3}), \ \  R\rightarrow+\infty
\end{equation}
for some $p\in X,$ then $X$ has only one end with infinite volume.
\end{prop}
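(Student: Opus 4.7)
The plan is to combine the structural theorems of Li--Wang with a direct Dirichlet eigenvalue computation on a slab in the warped-product case. The assumption \eqref{1.2} forces $\lambda_1(X)=\lim_{R\to\infty}\lambda_1(B(p,R))=\frac{n^2}{4}$, the maximal value of the bottom of the $L^2$ spectrum allowed by $\mathrm{Ric}[g]\ge -ng$. Theorems 0.5 and 0.6 of \cite{li2002complete} then give the dichotomy: either $X$ has a single end of infinite volume, or $X$ is isometric to the warped product $(\mathbb{R}\times N,\,dt^2+\cosh^2(t)\,g_N)$ for some compact $n$-dimensional Riemannian manifold $(N,g_N)$. In the first case we are done, so it suffices to rule out the warped product.

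Suppose $(X,g)=(\mathbb{R}\times N,\,dt^2+\cosh^2(t)\,g_N)$ and write $p=(t_0,x_0)$, $C=\cosh(t_0)\,\mathrm{diam}(N,g_N)$. The triangle inequality applied to the broken path $(t_0,x_0)\to(t_0,x)\to(t,x)$ shows that the cylinder $S_R:=(t_0-R+C,\,t_0+R-C)\times N$ is contained in $B(p,R)$ for all large $R$, so $\lambda_1(B(p,R))\le\lambda_1(S_R)$. Testing against the rotationally symmetric trial function $u(t,x)=\phi(t)$ and using the conjugation $\phi(t)=\cosh(t)^{-n/2}w(t)$ together with an integration by parts converts the Rayleigh quotient into
\[
\frac{\int(\phi')^2\cosh^n(t)\,dt}{\int\phi^2\cosh^n(t)\,dt}=\frac{n^2}{4}+\frac{\int(w')^2\,dt+\int V(t)w^2\,dt}{\int w^2\,dt},\qquad V(t)=\tfrac{n(2-n)}{4}\mathrm{sech}^2(t).
\]
For $n\ge 2$ the potential $V$ is non-positive, so the choice $w(t)=\sin\!\bigl(\pi(t-t_0+R-C)/(2R-2C)\bigr)$ yields
\[
\lambda_1(B(p,R))\le\frac{n^2}{4}+\frac{\pi^2}{4(R-C)^2}=\frac{n^2}{4}+\frac{\pi^2}{4R^2}+O(R^{-3}).
\]
Since $\pi^2/R^2$ and $\pi^2/(4R^2)$ differ by a positive quantity of order $R^{-2}$, this upper bound contradicts \eqref{1.2}, so $X$ must have a single end of infinite volume.

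The only delicate point is that the slab upper bound must be sharp to order $R^{-2}$, with a strictly smaller $R^{-2}$ coefficient than in \eqref{1.2} and with error $O(R^{-3})$ or better. The non-positivity of the conjugation potential $V(t)=\tfrac{n(2-n)}{4}\mathrm{sech}^2(t)$ for $n\ge 2$ is precisely what ensures that the sine trial function attains the clean bound $\frac{n^2}{4}+\frac{\pi^2}{4(R-C)^2}$ without picking up a positive $O(R^{-1})$ correction that would destroy the comparison, and the resulting factor-of-four gap between $\pi^2/4$ and $\pi^2$ delivers the contradiction. Everything else is packaging around the theorems of \cite{li2002complete}.
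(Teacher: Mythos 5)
Your plan is essentially the paper's plan --- reduce to the Li--Wang rigidity theorem and then show that the warped products violate the eigenvalue asymptotic by a factor of four in the $R^{-2}$ coefficient --- but you have mis-stated the rigidity theorem, and the gap is material. Theorems 0.5 and 0.6 of \cite{li2002complete} do not yield a dichotomy; they yield a trichotomy. Besides (i) a single end of infinite volume and (iii) the hyperbolic-cylinder case $n=2$, $(M,g)=(\mathbb{R}\times N,\,dt^2+\cosh^2 t\,g_N)$ that you address, there is also (ii) the exponentially warped cylinder $(M,g)=(\mathbb{R}\times N,\,dt^2+e^{2t}g_N)$ with $(N,g_N)$ compact of nonnegative Ricci curvature. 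Your proposal never mentions case (ii), so as written the proof does not close.

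Fortunately (ii) is excluded by the same mechanism. On $(\mathbb{R}\times N,\,dt^2+e^{2t}g_N)$ one has $\Delta t=n$, so for a radial test function $\phi(t)$ one computes $\Delta\phi=\phi''+n\phi'$; conjugating $\phi=e^{-nt/2}w$ eliminates the first-order term exactly (no potential appears at all), and $w=\cos\bigl(\tfrac{\pi}{2R}t\bigr)$ on the slab $(-R,R)\times N$ produces $\lambda_1\bigl((-R,R)\times N\bigr)=\tfrac{n^2}{4}+\tfrac{\pi^2}{4R^2}$. Comparing slabs with geodesic balls as you did for the $\cosh$ case gives $\lambda_1(B(p,R))=\tfrac{n^2}{4}+\tfrac{\pi^2}{4R^2}+O(R^{-3})$, again contradicting \eqref{1.2}. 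The paper handles cases (ii) and (iii) by writing down explicit Dirichlet eigenfunctions $e^{-nt/2}\cos(\tfrac{\pi}{2R}t)$ and $\mathrm{sech}(t)\cos(\tfrac{\pi}{2R}t)$ on the slab; your Rayleigh-quotient/conjugation argument for case (iii) is a valid and slightly more flexible variant (it would work for general $n$, although (iii) only arises for $n=2$), but until you add the parallel estimate for case (ii) the argument is incomplete.
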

This paper is organized as follows.  In section 2, we first provide some background information on the Dirichlet eigenvalue. Then in sections 3 and 4, we prove theorem 1.1. In order to get the upper bound of the first Dirichlet eigenvalue of geodesic balls, we use the eigenvalue comparison theory and the eigenvalue formula in hyperbolic space. To estimate the lower bound, we somewhat enhance Lee's work. To be more precise, we create a new test function $u^{-\frac{n}{2}}\cdot\sin(a\ln\varepsilon u)$ on a bounded domain. Here $u$ is the eigenfunction solution to $\Delta u=(n+1)u$ and was first used by Lee in \cite{lee1994spectrum}. In the end, we prove proposition 1.2 in section 5.
\section{The first Dirichlet eigenvalue of manifolds}
Let's introduce some materials about Dirichlet eigenvalue in this section. Suppose that $(M,g)$ is a complete manifold and $\Omega\subseteq M$ is a bounded domain of $M$ with piecewise smooth boundary. The Dirichlet eigenfunctions are defined by solving the following problem for $u\neq 0$ and eigenvalue $\lambda.$
 \begin{equation}
 \left\{
    \begin{array}{l}
    \Delta u=-\lambda u \ \ \ \ in \ \Omega,
    \\ u=0\ \ \ \ \ \ \ \ \ \ \ \ \ on\ \partial \Omega
    \end{array}
 \right.
 \end{equation}
 where $\Delta=\frac{1}{\sqrt{G}}\frac{\partial}{\partial x^i}(g^{ij}\sqrt{G}\frac{\partial}{\partial x^j}).$ The smallest eigenvalue is denoted
 by $\lambda_1=\lambda_1(\Omega)>0.$ Recall the Sobolev space
 $H^1(\Omega)=W^{1,2}(\Omega)$ and $H^1_0(\Omega)\subseteq H^1(\Omega)$ is defined to be the closure of the infinitely differentiable functions compactly supported in $\Omega.$ Then by the max-min principle,
 \begin{equation}
   \lambda_1(\Omega)=\inf\limits_{f\in H^1_0(\Omega)\setminus\{0\}}\frac{\int_\Omega |\nabla f|^2dV_g}{\int_\Omega  f^2dV_g}
 \end{equation}
It's easy to see that the eigenvalue has domain monotonicity: if $\Omega_1\subseteq\Omega_2\Subset M,$ then $ \lambda_1(\Omega_1)\geq  \lambda_1(\Omega_2).$
 \par Now we suppose that $(M,g)$ is a noncompact manifold, and denote the greatest lower bound for the $L^2$-spectrum of the Laplacian by
 \begin{equation}
 \lambda_1(M):=\inf spec(-\Delta)=\inf\limits_{f\in H^1_0(M)\setminus\{0\}}\frac{\int_M |\nabla f|^2dV_g}{\int_M  f^2dV_g}.
  \end{equation}
 Notice that $\lambda_1(M)$ does not necessarily be an $L^2$ eigenvalue of $-\Delta,$
but is motivated by the characterization by
 \begin{equation}
 \lambda_1(M)=\lim\limits_{k\rightarrow\infty}\lambda_1(\Omega_k)
 \end{equation}
 for any smoothly compact exhaustion $\{\Omega_k\}$ of $M.$
 \par For example, for the hyperbolic space $M=\mathbb{H}^{n+1},$ we know that $spec(-\Delta)=[\frac{n^2}{4},+\infty),$ see \cite{mckean1970upper}. Then for any $p\in \mathbb{H}^{n+1},$
 \begin{equation}
 \lim\limits_{R\rightarrow+\infty}\lambda_1(B(p,R))=\frac{n^2}{4}.
  \end{equation}
 It is an interesting problem what the formula of $\lambda_1(B(p,R)$ looks like. Or how $\lambda_1(B(p,R))$ tends to $\frac{n^2}{4}?$
It is shown in \cite{savo2009lowest} and \cite{artamoshin2016lower} that
 \begin{equation}
  \lambda_1(B(p,R))=\frac{n^2}{4}+\frac{\pi^2}{R^2}+O(R^{-3}),\ \ R\rightarrow+\infty.
\end{equation}
In this paper, we prove that (2.6) still holds for AHE manifolds with conformal infinity of nonnegative Yamabe type.

\section{The upper bound of eigenvalues}
Let's recall the classic eigenvalue comparison theorem of Cheng \cite{cheng1975eigenvalue}. If $(X,g)$ is an $n+1$ dimensional complete manifold satisfying that $Ric[g]\geq -n g,$ then for any $p\in X$ and $R>0,$ $\lambda_1(B(p,R))\leq \lambda_1(B^\mathbb{H}(R)).$ Here
$B^\mathbb{H}(R)$ is a geodesic ball of radius $R$ in hyperbolic space. He also showed that $\lambda_1(B^\mathbb{H}(R))\leq\frac{n^2}{4}+\frac{C}{R^2}$ for some positive constant $C.$ Later the upper bound estimate was extended by Gage, see theorem 5.2 in \cite{gage1980upper}. In the following, we provide a weak version of the estimate for the upper bound and the proof is also simpler.
\begin{theorem}
  Let $\mathbb{H}^{n+1}$ be the hyperbolic space of $n+1$ dimension, then for any $p\in \mathbb{H}^{n+1},$
\begin{equation}
\lambda_1(B(p,R))\leq \frac{n^2}{4}+\frac{\pi^2}{R^2}+O(R^{-3})\ \  \ R\rightarrow+\infty.
\end{equation}
\end{theorem}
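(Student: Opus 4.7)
The plan is to use the variational characterization of $\lambda_1$ applied to an explicit radial test function on $B(p,R)$. Since $\mathbb{H}^{n+1}$ is homogeneous and isotropic, I may center at $p$ and work in geodesic polar coordinates, in which $g = dr^2 + \sinh^2(r)\, g_{S^n}$ and the volume form is $\sinh^n(r)\,dr\,d\sigma$. For a radial function $f = f(r)$ the Rayleigh quotient reduces to
\[
\mathcal{R}(f) \;=\; \frac{\int_0^R (f')^2 \sinh^n r\, dr}{\int_0^R f^2 \sinh^n r\, dr}.
\]

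The test function I would use is
\[
f(r) \;=\; \sinh^{-n/2}(r)\,\sin(\pi r/R).
\]
The prefactor $\sinh^{-n/2}$ reflects the asymptotic ground state at infinity (consistent with eigenvalue $n^2/4$), while $\sin(\pi r/R)$ is the first Dirichlet mode on $[0,R]$. One first checks $f \in H^1_0(B(p,R))$: the apparent singularity at the origin is of order $r^{1-n/2}$, which is harmless since $\{p\}$ has vanishing $W^{1,2}$-capacity in dimension $n+1\geq 2$, and the Dirichlet energy is finite by direct inspection.

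The computational heart of the argument is the substitution $\phi(r) := \sinh^{n/2}(r)\,f(r) = \sin(\pi r/R)$. Expanding $(f')^2 \sinh^n r = \bigl(\phi' - \tfrac{n}{2}\coth r\,\phi\bigr)^2$, integrating the cross term by parts, and using $\coth^2 = 1+\operatorname{csch}^2$ produces
\[
\int_0^R (f')^2 \sinh^n r\, dr \;=\; \int_0^R (\phi')^2\, dr \;+\; \frac{n^2}{4} \int_0^R \phi^2\, dr \;+\; \frac{n(n-2)}{4}\int_0^R \operatorname{csch}^2(r)\,\phi^2\, dr,
\]
while the denominator equals simply $\int_0^R \phi^2\, dr$. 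The boundary term at $r=R$ vanishes because $\phi(R)=0$, and the one at $r=0$ vanishes because $\phi(r)^2\coth(r) \sim \pi^2 r/R^2 \to 0$; this is exactly why the $\sin$ factor (rather than a constant) is chosen. Plugging in $\phi(r)=\sin(\pi r/R)$ the first two numerator terms give $\pi^2/(2R) + n^2R/8$ and the denominator gives $R/2$, which already contributes $n^2/4 + \pi^2/R^2$.

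To control the remainder I use $|\sin(\pi r/R)| \le \pi r/R$ to obtain
\[
\int_0^R \operatorname{csch}^2(r)\,\sin^2(\pi r/R)\, dr \;\le\; \frac{\pi^2}{R^2} \int_0^\infty r^2\, \operatorname{csch}^2(r)\, dr \;=\; O(R^{-2}),
\]
since $r^2 \operatorname{csch}^2 r$ is bounded near $0$ and decays like $e^{-2r}$ at infinity. After dividing by $R/2$ this term contributes $O(R^{-3})$, which matches the claimed estimate. The only step requiring care—rather than a real obstacle—is the integration by parts across the singularity of $\coth$ at $r=0$ and the verification that the singular test function is admissible; once that bookkeeping is done, the remainder is a direct calculation yielding the exact constants $n^2/4$ and $\pi^2$.
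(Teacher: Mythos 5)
Your proof is correct, but it takes a genuinely different route from the paper's. The paper chooses the smooth radial test function $f=e^{-\frac{n}{2}t}\sin(\frac{\pi}{R}t)$, writes the Rayleigh quotient as a ratio of one-dimensional integrals with weight $e^{-nt}\sinh^n t = 2^{-n}(1-e^{-2t})^n$, rescales $\theta = \pi t / R$, bounds the numerator by dropping the weight (using $(1-e^{-2R\theta/\pi})^n\leq 1$), and analyzes the denominator asymptotically via $\int_0^\pi e^{-r\theta}\sin^2\theta\, d\theta = 2r^{-3}+O(r^{-4})$; this keeps everything nonsingular and actually delivers the explicit coefficient of $R^{-3}$. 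You instead use $f=\sinh^{-n/2}(r)\sin(\pi r/R)$, which is the exact (rather than asymptotic) ground-state profile, and then perform the standard ground-state substitution $\phi = \sinh^{n/2}f$ plus an integration by parts against $\coth$, reducing the numerator to $\int(\phi')^2 + \frac{n^2}{4}\int\phi^2$ plus a $\operatorname{csch}^2$ remainder that is manifestly $O(R^{-2})$. Your route makes the leading two terms $\frac{n^2}{4}+\frac{\pi^2}{R^2}$ appear with no asymptotic expansion at all, and it dovetails nicely with the lower-bound test function $u^{-n/2}\sin(a\ln\varepsilon u)$ used later; the price is that your test function is singular at the center (of order $r^{1-n/2}$ for $n>2$), so you must invoke the vanishing $W^{1,2}$-capacity of a point to conclude admissibility, and you need the boundary term $\coth(r)\phi(r)^2\to 0$ at $r=0$, both of which you correctly flag. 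Your sign computation for the remainder coefficient $\frac{n(n-2)}{4}$ is consistent with the claimed upper bound regardless of $n$ (it is nonnegative for $n\geq 2$, and negative, hence only helping, for $n=1$). Overall both arguments are sound; the paper's is more elementary and yields an explicit $R^{-3}$ constant, while yours is structurally cleaner but requires a small amount of capacity/regularity bookkeeping.
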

\begin{proof}
  Consider the rotationally symmetric model:
  $$(\mathbb{R}^{n+1},g_\mathbb{H}=dt^2+\sinh^2t g_\mathbb{S})$$
  and let $p$ be the center point. For any $R>0,$ we define the function
\begin{equation}
f=e^{-\frac{n}{2}t}\cdot \sin (\frac{\pi}{R} t)\in H^1_0((B(p,R))
\end{equation}
  Then
  \begin{equation}
    \begin{aligned}
       \lambda_1(B(p,R))&\leq \frac{\int_{B(p,R)}|\nabla f|^2 dV[g^\mathbb{H}]}{\int_{B(p,R)}f^2 dV[g^\mathbb{H}]}
       \\ &= \frac{\int_0^R e^{-nt}(-\frac{n}{2}\sin (\frac{\pi}{R}t)+\frac{\pi}{R}\cos(\frac{\pi}{R} t))^2 \cdot \omega_n\sinh^n tdt}
              {\int_0^R e^{-nt} \sin^2 (\frac{\pi}{R} t) \cdot \omega_n\sinh^n tdt}
       \\&=\frac{\int_0^R (1-e^{-2t})^n\cdot (-\frac{n}{2}\sin (\frac{\pi}{R}t)+\frac{\pi}{R}\cos(\frac{\pi}{R} t))^2 dt}
              {\int_0^R (1-e^{-2t})^n\cdot \sin^2 (\frac{\pi}{R} t)dt}
       \\&=\frac{\int_0^\pi (1-e^{-\frac{2R\theta}{\pi}})^n\cdot (-\frac{n}{2}\sin \theta+\frac{\pi}{R}\cos \theta)^2 d\theta}
              {\int_0^\pi (1-e^{-\frac{2R\theta}{\pi}})^n\cdot \sin^2 \theta d\theta}
       \\&=\frac{F(R)}{G(R)}
    \end{aligned}
  \end{equation}
 where
  $$F(R)\leq\int_0^\pi (-\frac{n}{2}\sin \theta+\frac{\pi}{R}\cos \theta)^2 d\theta=\frac{\pi}{2}\cdot(\frac{n^2}{4}+\frac{\pi^2}{R^2})$$
  For the term $G(R),$ a direct calculation indicates that
 \begin{equation}
       \int_0^\pi e^{-r\theta}\cdot \sin^2 \theta d\theta=\frac{2}{r(r^2+4)}(1-e^{-\pi r})=\frac{2}{r^3}+O(r^{-4}),\ r\rightarrow+\infty
 \end{equation}
 Hence we could get that
  \begin{equation}
    \begin{aligned}
      G(R)&=\int_0^\pi [1+\sum\limits_{k=1}^n C_n^k(-e^{-\frac{2R\theta}{\pi}})^k]\sin^2\theta d\theta
        \\&=\frac{\pi}{2}-\frac{\pi^3}{4}[\sum\limits_{k=1}^n C_n^k\frac{(-1)^{k+1}}{k^3}]\frac{1}{R^3}+O(R^{-4})
    \end{aligned}
  \end{equation}
 In the end, we deduce that
   \begin{equation}
       \lambda_1(B(p,R))\leq \frac{F(R)}{G(R)}\leq \frac{n^2}{4}+\frac{\pi^2}{R^2}+\frac{n^2\pi^2}{8}[\sum\limits_{k=1}^n C_n^k\frac{(-1)^{k+1}}{k^3}]\frac{1}{R^3}+O(R^{-4}).
   \end{equation}

\end{proof}
\section{The lower bound of eigenvalues}
Suppose that $(X,g)$ satisfies the conditions in theorem 1.1. Lee proved that $\lambda_1(X)=\frac{n^2}{4}$ in \cite{lee1994spectrum}. The key step is to construct a proper test function $\varphi=u^{-\frac{n}{2}}.$ Here $u$ is an important positive eigenfunction with prescribed growth at infinity. In order to make our proof more clear, we would give a short quick introduction to Lee's proof.
\subsection{A quick review of Lee's work}
\begin{lemma}\cite{lee1994spectrum}
Let $(X,g)$ be an $n+1-$ dimensional AHE manifold with boundary metric $\hat{g}$ and let $x$ be the associated geodesic defining function. Then there is a unique positive eigenfunction $u$ on $X$ such that
$$\Delta u=(n+1)u.$$
and $u$ has the following form of expansion at the boundary
$$u=\frac{1}{x}+\frac{\hat{R}}{4n(n-1)}x+O(x^2).$$
\end{lemma}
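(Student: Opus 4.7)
The plan is to use indicial-root analysis at $\partial X$ together with the resolvent of $-\Delta$. Work in a collar using the geodesic defining function $x$ associated with $\hat{g}$, so $g=x^{-2}(dx^2+h_x)$ with $h_0=\hat{g}$. Substituting $u\sim x^s$ into $\Delta-(n+1)$ gives the indicial equation $s(s-n)=n+1$, whose roots are $s=-1$ and $s=n+1$; this motivates the ansatz $u=x^{-1}+a_0+a_1 x+a_2 x^2+\cdots$.

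To pin down the coefficients, one writes
$$\Delta f = x^2\partial_x^2 f + \bigl[(1-n)x + x^2\,\partial_x\log\sqrt{|h_x|}\bigr]\partial_x f + x^2\Delta_{h_x}f,$$
and invokes the Einstein expansion $\sqrt{|h_x|}/\sqrt{|\hat{g}|}=1-\tfrac{\hat{R}}{4(n-1)}x^2+O(x^4)$, which follows from tracing $\operatorname{Ric}[g]=-ng$ in geodesic normal form (so that $\operatorname{tr}_{\hat g}h^{(2)}=-\hat R/(2(n-1))$). Matching the coefficients of $x^{-1}, 1, x$ in $\Delta u=(n+1)u$ yields $a_0=a_1=0$ and $a_2=\hat R/(4n(n-1))$. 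Iterating the recursion produces a full formal power series $u_f$ with $(\Delta-(n+1))u_f=O(x^N)$ for any $N$.

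Existence then follows from a correction argument. Because $-(n+1)<0$ lies strictly below $\inf\sigma(-\Delta)=n^2/4$, the resolvent $(\Delta-(n+1))^{-1}$ is bounded on $L^2(X)$. Solving $(\Delta-(n+1))w=(\Delta-(n+1))u_f$ in $L^2$ and then applying boundary regularity (via the Mazzeo--Melrose $0$-calculus, or the iterative scheme used by Graham--Lee) yields a corrector $w$ that is smooth in $X$ and decays at least like $x^{n+1}$ at $\partial X$. Set $u:=u_f-w$; this is a genuine smooth solution of $\Delta u=(n+1)u$ that retains the stated expansion. Positivity is then forced by the maximum principle: since $u\to+\infty$ as $x\to 0$, $u$ is positive outside some compact set, and if $u$ were negative anywhere, it would attain a negative global minimum at an interior point $p$, where $\Delta u(p)\ge 0$ but $(n+1)u(p)<0$, a contradiction. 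The strong maximum principle rules out interior zeros.

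For uniqueness, if $u_1,u_2$ both satisfy the hypotheses, then $v:=u_1-u_2$ solves $\Delta v=(n+1)v$ with $v=O(x^2)$. The indicial roots $-1,n+1$ exclude a nonzero $x^{-1}$ mode, so $v$ must actually vanish at least like $x^{n+1}$, which places $v$ in $L^2(X,dV_g)$ (the volume form gives $|v|^2\,dV_g\sim x^{n+1}\,dx\,d\hat{g}$). Since $-(n+1)\notin\sigma(-\Delta)$, this forces $v\equiv 0$. The principal obstacle is the existence step: one must track boundary regularity of the $L^2$ corrector carefully to ensure that the pointwise expansion of $u$ survives at orders $x^{-1},1,x$ with the correct coefficients—this is where scattering-theoretic machinery (or a hands-on iteration as in Lee's paper) becomes necessary.
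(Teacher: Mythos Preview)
The paper does not prove this lemma; it is quoted from Lee \cite{lee1994spectrum} and used as a black box in Section~4.1. There is therefore no in-paper argument to compare against.

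Your outline is the standard one and is essentially sound, but note one indexing slip: with your ansatz $u=x^{-1}+a_0+a_1x+a_2x^2+\cdots$, the recursion gives $a_0=0$ and $a_1=\hat R/(4n(n-1))$, not $a_1=0$ with the $\hat R$ coefficient appearing at order $x^2$. The lemma places the $\hat R$ term at order $x$, consistent with the fact that in the Einstein expansion $h_x=\hat g+h^{(2)}x^2+\cdots$ the first nontrivial input enters at $x^2$ in the metric, hence at $x^{-1}\cdot x^2=x$ in $u$. A second small point: you invoke $\inf\sigma(-\Delta)=n^2/4$, but that equality requires the nonnegative Yamabe hypothesis, which is not part of the lemma. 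What you actually need, and what holds on any complete manifold, is $-(n+1)<0\le\inf\sigma(-\Delta)$, so the resolvent argument goes through regardless.
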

Let $\varphi=u^{-\frac{n}{2}},$ then
$$-\frac{\Delta\varphi}{\varphi}=\frac{n^2}{4}+\frac{n(n+2)}{4}(1-\frac{|du|^2_{g}}{u^2}).$$
One can estimate near the boundary:
$$u^2-|du|^2_{g}=\frac{\hat{R}}{n(n-1)}+o(1).$$
On the other hand, the Bochner formula implies that
$$-\Delta(u^2-|du|^2_{g})=2|\frac{\Delta u}{n+1}g+\nabla^2 u|^2\geq 0.$$
When $Y(\partial X,[\hat{g}])\geq 0$ we can choose a representative $\hat{g}$ whose scalar curvature $\hat{R}\geq 0,$ then the maximum principle implies that $u^2-|du|^2_{g}\geq 0$ in $X.$ So
$-\frac{\Delta\varphi}{\varphi}\geq \frac{n^2}{4}$ in $X.$ According to the eigenvalue comparison theorem of Cheng-Yau \cite{cheng1975eigenvalue}, $\lambda_1(X)\geq \frac{n^2}{4}.$
\\
\par Now we turn to research the first Dirichlet eigenvalue of geodesic balls. For sufficiently small $\varepsilon>0,$ let $X_\varepsilon=\partial X\times(0,\varepsilon).$ We study the first Dirichlet eigenvalue of $X\setminus X_\varepsilon.$ If $\hat{R}>0,$ we get that
$$1-\frac{|du|^2_{g}}{u^2}\geq\frac{\hat{R}}{n(n-1)}\frac{1}{u^2}+o(\frac{1}{u^2})=\frac{\hat{R}}{n(n-1)} x^2+o(x^2).$$
Then
$$-\frac{\Delta\varphi}{\varphi}\geq \frac{n^2}{4}+c\varepsilon^2,\ \ \ on\ X\setminus X_\varepsilon$$
for some positive constant $c$ and hence $\lambda_1(X\setminus X_\varepsilon)\geq\frac{n^2}{4}+c\varepsilon^2.$
As a consequence,
\begin{equation}
\lambda_1(B(p,R))\geq \frac{n^2}{4}+\frac{C}{e^{2R}}
\end{equation}
for some $C>0$ provided $R$ is large enough. If $\hat{R}=0,$ we know that $1-\frac{|du|^2_{g}}{u^2}$ is still positive in $X,$ see \cite{guillarmou2010spectral}. Then a similar estimate of (4.1) could be obtained. The lower bound $\frac{C}{e^{2R}}$ is too "small" compared to $\frac{\pi^2}{R^2}.$
 We need to find a better test function to get a sharper lower bound of $\lambda_1(B(p,R)).$
\subsection{A new test function}
Let $u$ be the eigenfunction that is defined in lemma 4.1 and $\varphi=u^{-\frac{n}{2}}.$ In the following, for sufficiently small $\varepsilon>0,$ we consider a new test function
\begin{equation}
\psi=\varphi\cdot h=u^{-\frac{n}{2}}\cdot\sin(a\ln\varepsilon u)
\end{equation}
on the bounded domain
\begin{equation}
F_\varepsilon=\{p\in X:u(p)<\frac{1}{\varepsilon}\}
\end{equation}
where $a=a(n,\varepsilon)<0$ is a constant to be determined.
A simple calculation indicates that
\begin{equation}
h'=\frac{a\cos(a\ln\varepsilon u)}{u},\ \ h''=\frac{-a^2\sin(a\ln\varepsilon u)-a\cos(a\ln\varepsilon u)}{u^2}=-a^2\frac{h}{u^2}-\frac{h'}{u}.
\end{equation}
Hence
\begin{equation}
\Delta h=h'\Delta u+h''|du|^2=(n+1)uh'-(a^2 h+uh')\frac{|du|^2}{u^2}
\end{equation}
and
\begin{equation}
2g(d\ln \varphi,d\ln h)=2g(-\frac{n}{2}\frac{du}{u},\frac{h'}{h}du)=-n\frac{uh'}{h}\frac{|du|^2}{u^2}.
\end{equation}
As a consequence,
\begin{equation}
  \begin{aligned}
  -\frac{\Delta\psi}{\psi}&=-\frac{\Delta\varphi}{\varphi}-(\frac{\Delta h}{h}+2g(d\ln \varphi,d\ln h)
  \\ &=\frac{n^2}{4}+\frac{n(n+2)}{4}(1-\frac{|du|^2}{u^2})-
  [(n+1)\frac{uh'}{h}-(a^2+\frac{uh'}{h})\frac{|du|^2}{u^2}-n\frac{uh'}{h}\frac{|du|^2}{u^2}]
  \\ &=\frac{n^2}{4}+a^2+(1-\frac{|du|^2}{u^2})[\frac{n(n+2)}{4}-(n+1)\frac{uh'}{h}-a^2]
  \\ &=\frac{n^2}{4}+a^2+(1-\frac{|du|^2}{u^2})[\frac{n(n+2)}{4}-(n+1)a\cdot\cot(a\ln\varepsilon u)-a^2]
  \end{aligned}
\end{equation}
We could assume that $u\geq 1 $ on $X,$ or else we use $ku$ instead where $k$ is a constant large enough.  Now set
\begin{equation}
a=\frac{\pi}{\ln \varepsilon}+\frac{c_n}{\ln^2\varepsilon}
\end{equation}
for some constant $c_n>0,$ then $a<0.$ Hence on $F_\varepsilon,$ we have that
$$a\ln\varepsilon u\in(0,\pi+\frac{c_n}{\ln \varepsilon}]\subseteq(0,\pi).$$
As a result, $h$ is smooth and positive on $F_\varepsilon$ and so is $\psi.$ Furthermore,
\begin{equation}
  \begin{aligned}
  -a\cdot\cot(a\ln\varepsilon u)&\geq-a\cdot\cot(\pi+\frac{c_n}{\ln\varepsilon})=-a\frac{\cos(\pi+\frac{c_n}{\ln\varepsilon})}{\sin(\pi+\frac{c_n}{\ln\varepsilon})}
  \\&>\frac{a}{\sin(\pi+\frac{c_n}{\ln\varepsilon})}=\frac{\frac{\pi}{\ln \varepsilon}+\frac{c_n}{\ln^2\varepsilon}}{\sin(-\frac{c_n}{\ln\varepsilon})}\rightarrow -\frac{\pi}{c_n}.
  \end{aligned}
\end{equation}
Therefore
\begin{equation}\label{4.10}
\liminf\limits_{\varepsilon\rightarrow 0}[\frac{n(n+2)}{4}-(n+1)a\cdot\cot(a\ln\varepsilon u)-a^2]\geq \frac{n(n+2)}{4}-(n+1)\frac{\pi}{c_n}.
\end{equation}
If we choose $c_n\geq\frac{4\pi(n+1)}{n(n+2)},$ then the formula (\ref{4.10})is nonnegative and finally we can get that
\begin{equation}
-\frac{\Delta\psi}{\psi}\geq\frac{n^2}{4}+a^2
\end{equation}
on $F_\varepsilon$ provided $\varepsilon$ is sufficiently small. Then
\begin{equation}
\lambda_1(F_\varepsilon)\geq\frac{n^2}{4}+a^2=\frac{n^2}{4}+\frac{\pi^2}{\ln^2\varepsilon}+O(\frac{1}{\ln^3\varepsilon}).
\end{equation}
For any $p\in X$ and large $R>0,$ let's consider the first Dirichlet eigenvalue of $B(p,R).$  Since
\begin{equation}
|u-\frac{1}{x}|\leq C_1,\ \ \ |-\ln x(\cdot)-d_{g}(p,\cdot)|\leq C_2
\end{equation}
where $C_1$ and $C_2$ are positive constants, we have that
\begin{equation}
e^{d_{g}(p,\cdot)}\geq \frac{e^{-C_2}}{x}\geq e^{-C_2}(u-C_1)
\end{equation}
Then
$$B(p,R)\subseteq\{q\in X:u(q)< e^{R+C_2}+C_1\}\subseteq F_{e^{-R-C_3}}
$$
for some constant $C_3>0$ when $R$ is large enough.
Hence
\begin{equation}\label{4.15}
  \begin{aligned}
    \lambda_1(B(p,R)&\geq \lambda_1(F_{e^{-R-C_3}})
    \\ & \geq\frac{n^2}{4}+\frac{\pi^2}{(R+C_3)^2}+O(\frac{1}{(R+C_3)^3})
    \\ &=\frac{n^2}{4}+\frac{\pi^2}{R^2}+O(\frac{1}{R^3}).
  \end{aligned}
\end{equation}

Proof of theorem 1.1: theorem 3.1 and the eigenvalue comparison theorem in \cite{cheng1975eigenvalue} provide the upper bound of the first Dirichlet eigenvalue of balls while (\ref{4.15}) provides the lower bound. Then we finish the proof for theorem 1.1.
\section{The geometric property of the asymptotical behavior}
To prove proposition 1.2, we introduce an important result of Li and Wang:
\begin{theorem}\label{thm5.1}\cite{li2002complete}
  Let $(M,g)$ be an $n+1$ dimensional complete manifold with $n\geq 2.$ Suppose that $Ric[g]\geq -n$ and $\lambda_1(M)=\frac{n^2}{4}.$ Then
  \par (1) $M$ has only one end with infinite volume; or
  \par (2) $(M,g)=(\mathbb{R}\times N,dt^2+e^{2t}g_N)$ where $(N,g_N)$ is an $n-$dimensional compact manifold satisfying that $Ric[g_N]\geq 0;$ or
  \par (3) $n=2$ and $(M,g)=(\mathbb{R}\times N,dt^2+\cosh^2 tg_N)$ where $N$ is a compact surface satisfying that the sectional curvature $K_N\geq -1.$
\end{theorem}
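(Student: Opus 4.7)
The plan is to argue dichotomously: either $M$ has exactly one end (which must then carry infinite volume), or the multi-end structure forces a warped-product splitting. Assume $M$ has at least two ends. The first step is to manufacture a positive eigenfunction realizing the bottom of the spectrum: exhaust $M$ by geodesic balls $B(p,R_k)$, let $f_k>0$ be the normalized first Dirichlet eigenfunction on $B(p,R_k)$ (whose eigenvalue tends to $\lambda_1(M)=\frac{n^2}{4}$), and extract via a Harnack and elliptic regularity argument a positive limit $f$ on $M$ solving $\Delta f+\frac{n^2}{4}f=0$. Using barrier functions built from hyperbolic-model eigenfunctions on each end, one can arrange $f$ to grow at the maximal exponential rate $n/2$ along one preferred end and to decay along the other(s).

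Apply the Bochner formula to $v=\log f$. Combined with $Ric[g]\geq -ng$, the identity $\Delta v+|\nabla v|^2=-\frac{n^2}{4}$, and an improved Kato inequality for solutions of a linear eigenvalue equation, a cutoff and maximum-principle integration yields the sharp pointwise bound
\begin{equation*}
|\nabla \log f|^2\leq \frac{n^2}{4}
\end{equation*}
on all of $M$. The prescribed exponential growth rate from Step 1 saturates this bound, so equality must be attained on an open set, and then propagates globally by unique continuation applied to the Bochner inequality.

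Tracing the equality case backwards through Bochner produces three rigidity statements: (a) $Ric(\nabla f,\nabla f)=-n|\nabla f|^2$; (b) the Hessian $\nabla^2 f$ has the block-diagonal form expected for a warped product with gradient direction along $\nabla f$; (c) the level sets of $f$ are totally umbilical with constant mean curvature dictated by the eigenvalue equation. Setting $t=\frac{2}{n}\log f$ then exhibits $(M,g)$ as a warped product $(\mathbb{R}\times N,\,dt^2+\phi(t)^2 g_N)$ where $\phi$ satisfies an ODE read off from the eigenvalue equation and the Ricci bound. Solving this ODE yields either $\phi(t)=e^t$ (case (2), forcing $Ric[g_N]\geq 0$ via the warped-product Ricci formula) or, compatible only in the borderline dimension $n=2$, $\phi(t)=\cosh t$ with cross-section satisfying $K_N\geq -1$ (case (3)). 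The single-end subcase is settled by observing that finite total volume would contradict $\lambda_1(M)=\frac{n^2}{4}$ via a standard test-function estimate, so case (1) automatically gives infinite volume on the unique end.

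The hardest step is the sharp gradient estimate at the critical eigenvalue. Standard Cheng-Yau type estimates only yield $|\nabla\log f|^2\leq C(n)$, which is strictly too large for rigidity; reaching the threshold $\frac{n^2}{4}$ requires the refined Kato inequality with its dimension-dependent constant that is sharp precisely enough to close the Bochner argument at the critical value. A second delicate point is excluding case (3) outside $n=2$: the $\cosh t$ warped profile demands that equality in the Ricci bound hold transversely as well, and the Gauss equation combined with $Ric[g]\geq -ng$ is then algebraically consistent only when $\dim N=2$, which is the origin of the dimensional restriction. Once these two hurdles are cleared, identifying the cross-section curvature conditions in each case is a direct computation from the warped-product Ricci identities.
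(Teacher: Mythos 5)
This statement is not proved in the paper at all: it is quoted verbatim from Li--Wang \cite{li2002complete} (their Theorems 0.5 and 0.6), so there is no in-paper argument to compare with; what can be assessed is whether your outline would actually prove the cited result, and it would not. The decisive gap is in your Steps 1--2. With only $Ric\geq -ng$ and $\lambda_1(M)=\frac{n^2}{4}$ you do have a positive solution of $\Delta f+\frac{n^2}{4}f=0$ (Fischer-Colbrie--Schoen/Sullivan), and the sharp bound $|\nabla\log f|\leq \frac n2$ is a known estimate; but the claim that one can ``arrange $f$ to grow at the maximal exponential rate $n/2$ along one preferred end'' using barriers ``built from hyperbolic-model eigenfunctions'' has no justification: the ends carry no asymptotic model whatsoever (no conformal compactness here, only a Ricci lower bound), so there are no model barriers to compare with, and the limit of Dirichlet eigenfunctions gives no control of the growth rate along a chosen end. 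Worse, even if some averaged exponential growth rate were established, saturation of a pointwise gradient estimate ``at infinity'' does not produce a point of $M$ where $|\nabla\log f|^2=\frac{n^2}{4}$ holds; the Bochner/strong-maximum-principle rigidity you invoke needs interior equality (then it can indeed propagate), and nothing in your argument produces it. Your outline also never uses the parabolic/nonparabolic (finite vs.\ infinite volume) distinction among ends, which is exactly where the compactness of $N$ and the dichotomy between the $e^{2t}$ and $\cosh^2 t$ profiles come from; warped-product splitting alone cannot force $N$ compact (hyperbolic space itself splits as $dt^2+e^{2t}g_{\mathbb{R}^n}$).

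For contrast, Li--Wang's actual mechanism is quite different: assuming more than one end, they use Li--Tam theory to produce a harmonic function adapted to the ends (bounded with finite Dirichlet integral when there are two nonparabolic ends), and they exploit $\lambda_1=\frac{n^2}{4}$ through sharp \emph{integral} estimates --- exponential volume growth of rate $2\sqrt{\lambda_1}$ for infinite-volume ends, exponential volume decay for finite-volume ends, and a weighted Bochner/improved-Kato integral inequality --- whose borderline equality cases force the warped-product structure; the finite-volume-end case yields profile $e^{t}$ with $Ric[g_N]\geq 0$, and the two-infinite-volume-end case is consistent with $\lambda_1=\frac{n^2}{4}$ only when $n=2$ (the $\cosh$ model has bottom spectrum $n-1<\frac{n^2}{4}$ for $n>2$), which is the true source of the dimensional restriction rather than the Gauss-equation count you suggest. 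If you want to pursue your pointwise route, you would have to replace Step 1 by an argument that actually links the existence of a second end to interior equality in the gradient estimate; as written, that link is missing and the proof does not close.
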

In the following, we will show that the rate of eigenvalues in case (2) and (3) of theorem 5.1  does not match the formula (\ref{1.2}).
\begin{example}
Let $(M,g)=(\mathbb{R}\times N,dt^2+e^{2t}g_N)$ be an $n+1-$dimensional manifold $(n\geq 2)$ where $(N,g_N)$ is an $n-$dimensional compact manifold satisfying that $Ric[g_N]\geq 0.$ Then
$Ric[g]\geq -n$ and $\lambda_1(M)=\frac{n^2}{4}.$
\end{example}
Now we are going to study the formula of $\lambda_1(B(p,R)).$ We assume that $d_N=diam (N,g_N)>0$ and for any $p\in M$ and large $R>0,$ let
$d_p=dist_g(p,N).$ Then
\begin{equation}\label{5.1}
  B(p,R-d_p)\subseteq B(N,R)\subseteq B(p,R+d_N+d_p).
\end{equation}
Here  $B(N,R)=(-R,R)\times N.$ Let $f=e^{-\frac{n}{2}t}\cos(\frac{\pi}{2R}t),$ then
$$f>0\ \ in\ B(N,R),\ \ \ \ f=0 \ \ on\ \partial B(N,R).$$
On the other hand,
\begin{equation}
  \begin{aligned}
    \Delta f&=f'(t)\Delta t+f''(t)|\nabla t|^2=nf'(t)+f''(t)
    \\ &=n e^{-\frac{n}{2}t}[-\frac{n}{2}\cos(\frac{\pi}{2R}t)-\frac{\pi}{2R}\sin(\frac{\pi}{2R}t)]+
         e^{-\frac{n}{2}t}[\frac{n^2}{4}\cos(\frac{\pi}{2R}t)
    \\ &\ \ \ \ +\frac{n\pi}{4R}\sin(\frac{\pi}{2R}t)+\frac{n\pi}{4R}\sin(\frac{\pi}{2R}t)-\frac{\pi^2}{4R^2}\cos(\frac{\pi}{2R}t)]
    \\ &=e^{-\frac{n}{2}t}[-\frac{n^2}{4}\cos(\frac{\pi}{2R}t)-\frac{\pi^2}{4R^2}\cos(\frac{\pi}{2R}t)]
    \\ &=-(\frac{n^2}{4}+\frac{\pi^2}{4R^2})f
  \end{aligned}
\end{equation}
which means that $u$ is an eigenfunction and $\lambda_1(B(N,R))=\frac{n^2}{4}+\frac{\pi^2}{4R^2}.$ Then
the monotonicity of the first Dirichlet eigenvalue and (\ref{5.1}) would imply that
\begin{equation}\label{5.3}
  \lambda_1(B(p,R))=\frac{n^2}{4}+\frac{\pi^2}{4R^2}+O(R^{-3}), \ \ R\rightarrow +\infty.
\end{equation}
\begin{example}
Let $(M,g)=(\mathbb{R}\times N,dt^2+\cosh^2(t) g_N)$ be a $3-$dimensional manifold where $(N,g_N)$ is a compact surface with Gaussian
curvature bounded from below by $-1.$ Then $Ric[g]\geq -2$ and $\lambda_1(M)=1.$
\end{example}
As discussed above, we only need to calculate the first Dirichlet eigenvalue of $B(N,R)=(-R,R)\times N.$ Let $f=\frac{\cos(\frac{\pi}{2R}t)}{\cosh (t)},$ then
$$f>0\ \ in\ B(N,R),\ \ \ \ f=0 \ \ on\ \partial B(N,R).$$
Furthermore,
\begin{equation}
f'(t)=-\frac{\pi}{2R}\frac{\sin(\frac{\pi}{2R}t)}{\cosh(t)}-\tanh(t)\cdot f
\end{equation}
and
\begin{equation}
f''(t)=(-\frac{\pi^2}{4R^4}+\tanh^2(t)-\frac{1}{\cosh^2(t)})f+\frac{\pi}{R}\frac{\sinh(t)}{\cosh^2(t)}\sin(\frac{\pi}{2R}t)
\end{equation}
and hence
\begin{equation}
  \begin{aligned}
    \Delta f&=f'(t)\Delta t+f''(t)|\nabla t|^2=f'(t)\cdot 2\tanh (t)+f''(t)
    \\ &=-\frac{\pi}{R}\frac{\sinh(t)}{\cosh^2(t)}\sin(\frac{\pi}{2R}t)-2\tanh^2(t)\cdot f
    \\ &\ \ \ \ +(-\frac{\pi^2}{4R^4}+\tanh^2(t)-\frac{1}{\cosh^2(t)})f+\frac{\pi}{R}\frac{\sinh(t)}{\cosh^2(t)}\sin(\frac{\pi}{2R}t)
    \\ &=(-\frac{\pi^2}{4R^4}-\tanh^2(t)-\frac{1}{\cosh^2(t)})f
    \\ &=-(1+\frac{\pi^2}{4R^2})f
  \end{aligned}
\end{equation}
We obtain that $\lambda_1(B(N,R))=1+\frac{\pi^2}{4R^2}$ and hence
  for any $p\in M,$
  \begin{equation}\label{5.7}
  \lambda_1(B(p,R))=1+\frac{\pi^2}{4R^2}+O(R^{-3}), \ \ R\rightarrow +\infty.
\end{equation}
Theorem \ref{thm5.1} and (\ref{5.3}), (\ref{5.7}) all lead naturally to Proposition 1.2.

\bibliographystyle{plain}%

\bibliography{bibfile}

\noindent{Xiaoshang Jin}\\
  School of mathematics and statistics, Huazhong University of science and technology, Wuhan, P.R. China. 430074
 \\Email address: jinxs@hust.edu.cn

\end{document}